\newcommand{\lk}{\mbox{lk}\,}
\newtheorem{thm}{Theorem}[section]
\newtheorem{cor}[thm]{Corollary}
\newtheorem{prop}[thm]{Proposition}
\newtheorem{rem}[thm]{Remark}
\numberwithin{equation}{section}
\begin{document}
 \bibliographystyle{amsplain}
 \title{Cohen-Macaulay lexsegment complexes in arbitrary codimension}

\author{Hassan Haghighi}
\address{Hassan Haghighi\\Department of Mathematics, K. N. Toosi
     University of Technology, Tehran, Iran.}

\author{Siamak Yassemi}
\address{Siamak Yassemi\\School of Mathematics, Statistics \&
Computer Science, University of Tehran, Tehran Iran.}

\author{Rahim Zaare-Nahandi}
      \address{Rahim Zaare-Nahandi\\School of Mathematics, Statistics \&
      Computer Science, University of Tehran, Tehran, Iran.}
      \thanks{H. Haghighi was supported in part by a grant from K. N. Toosi University of Technology}
      \thanks{S. Yassemi and R. Zaare-Nahandi were supported in part by a grant from the University of Tehran}

 \thanks{Emails: haghighi@kntu.ac.ir, yassemi@ipm.ir, rahimzn@ut.ac.ir}

\keywords{Squarefree lexsegment ideal, Cohen-Macaulay complex,
Buchsbaum complex, flag complex, ${\rm CM}_t$ complex}

\subjclass[2000]{13H10, 13D02}

\begin{abstract}

\noindent We characterize pure lexsegment complexes which are
Cohen-Macaulay in arbitrary codimension.  More precisely, we prove
that any lexsegment complex is Cohen-Macaulay if and only if it is
pure and its one dimensional links are connected, and, a lexsegment
flag complex is Cohen-Macaulay if and only if it is pure and
connected. We show that any non-Cohen-Macaulay lexsegment complex is
a Buchsbaum complex if and only if it is a pure disconnected flag
complex. For $t\ge 2$, a lexsegment complex is strictly
Cohen-Macaulay in codimension $t$ if and only if it is the join of a
lexsegment pure disconnected flag complex with a $(t-2)$-dimensional
simplex. When the Stanley-Reisner ideal of a pure lexsegment complex
is not quadratic, the complex is Cohen-Macaulay if and only if it is
Cohen-Macaulay in some codimension. Our results are based on a
characterization of Cohen-Macaulay and Buchsbaum lexsegment
complexes by Bonanzinga, Sorrenti and Terai.
\end{abstract}
\maketitle

\section{Introduction}

Primary significance of lexsegment ideals comes from Macaulay's
result that for any monomial ideal there is a unique lexsegment
monomial ideal with the same Hilbert function (see \cite{M 27} and
\cite{Ba 82}). Recent studies on the topic began with the work of
Bigatti \cite{B 93} and Hulett \cite{H 93} on extremal properties of
lexsegment monomial ideals. Aramova, Herzog and Hibi showed that for
any squarefree monomial ideal there exists a squarefree lexsegment
monomial ideal with the same Hilbert function \cite{AHH 98}. In this
direction, some characterizations of pure, Cohen-Macaulay and
Buchsbaum complexes associated with squarefree lexsegment ideals was
given by Bonansinga, Sorrenti and Terai in \cite{BST 12}. As a
generalization of Cohen-Macaulay and Buchsbaum complexes, ${\rm
CM}_t$ complexes, were studied in \cite{HYZ 12}. These are pure
simplicial complexes which are Cohen-Macaulay in codimension $t$.
Naturally, one may ask for a characterization of ${\rm CM}_t$
lexsegment complexes. In this paper, using the behavior of ${\rm
CM}_t$ property under the operation of join of complexes \cite{HYZ
15}, we first provide some modifications of the results of
Bonanzinga, Sorrenti and Terai in \cite{BST 12}. Then, we
characterize ${\rm CM}_t$ lexsegment simplicial complexes. Our
characterizations are mostly in terms of purity and connectedness of
certain subcomplexes. In particular, it turns out a lexsegment
complex is Cohen-Macaulay if and only it is pure and its one
dimensional links are connected while for lexsegment flag complexes,
the Cohen-Macaulay property is equivalent to purity and
connectedness of the simplicial complex. The Buchsbaum property is
equivalent to being Cohen-Macaulay or a pure flag complex. A
non-Buchsbaum complex is ${\rm CM}_t$, $t\ge 2$, if and only if it
is the join of a Buchsbaum complex with a $(t-2)$-simplex. It also
appears that any ${\rm CM}_t$ lexsegment complex for which the
associated Stanley-Reisner ideal is generated in degree $d \ge 3$,
is indeed Cohen-Macaulay. Our proofs are heavily based on the
results in \cite{BS 09} and particularly, on results in \cite{BST
12}.

\section{Preliminaries and notations}

Let $R = k[x_1, \cdots, x_n]$ be the ring of polynomials in $n$
variables over a field $k$ with standard grading. Let $M_d$ be the
set of all squarefree monomials of degree $d$ in $R$. Consider the
lexicographic ordering of monomials in $R$ induced by the order $x_1
> x_2 > \cdots > x_n$. A squarefree lexsegment monomial ideal in degree $d$ is
an ideal generated by a lexsegment $L(u, v) = \{ w\in M_d: u \ge w
\ge v \}$ for some $u, v \in M_d$ with $u \ge v$.\\

Let $\Delta$ be a simplicial complex on $[n] = \{1, \cdots, n\}$
with the Stanley-Reisner ring $k[\Delta]$. Recall that for any face
$F \in \Delta$, the link of $F$ in $\Delta$ is defined as follows:
$$\lk_\Delta(F)=\{G\in\Delta|G\cup F \in\Delta,
G\cap F =\varnothing\}.$$

In the sequel by a complex we will always mean a simplicial complex.
When a complex has a quadratic Stanley-Reisner ideal, that is, it is
the independence complex of a graph,then it is called a flag
complex.\\

A complex is said to satisfy the $S_2$ condition of Serre if
$k[\Delta]$ satisfies the $S_2$ condition. Using \cite[Lemma
3.2.1]{S 81} and Hochster's formula on local cohomology modules, a
pure $(d-1)$-dimensional complex $\Delta$ satisfies the $S_2$
condition if and only if
$\widetilde{H}_0(\mathrm{link}_\Delta(F);k)=0$ for all $F\in \Delta$
with $\#F\le d-2$ (see \cite[page 4]{T 07}). Therefore, $\Delta$ is
$S_2$ if and only if it is pure and ${\rm link}_\Delta(F)$ is
connected whenever $F\in \Delta$ and ${\rm dim}({\rm
link}_\Delta(F))\ge 1$.\\

Let $t$ be an integer $0\le t\le {\rm dim}(\Delta)-1$. A pure
complex $\Delta$ is called ${\rm CM}_t$, or Cohen-Macaulay in
codimension $t$, over $k$ if the complex $\lk_\Delta(F)$ is
Cohen-Macaulay over $k$ for all $F\in\Delta$ with $\#F\ge t$. It is
clear that for any $j\ge i$, ${\rm CM}_i$ implies ${\rm CM}_j$. For
$t\ge 1$, a ${\rm CM}_t$ complex is said to be strictly ${\rm CM}_t$
if it is not ${\rm CM}_{t-1}$. A squarefree monomial ideal is called
${\rm CM}_t$ if the associated simplicial complex is ${\rm CM}_t$.
Note that from the results by Reisner \cite{R 76} and Schenzel
\cite{S 81} it follows that ${\rm CM}_{0}$ is the same as
Cohen-Macaulayness and ${\rm CM}_1$ is identical with the Buchsbaum
property. \\

A complex $\Delta$ is said to be lexsegment if the associated
Stanley-Reisner ideal $I_\Delta$ is a lexsegment ideal. Therefore,
$\Delta$ is a ${\rm CM}_t$ lexsegment complex if $I_\Delta$ is a
lexsegment ideal and $\Delta$ is ${\rm CM}_t$.

\section{${\rm CM}_t$ lexsegment complexes}

The following result plays a significant role in the study of ${\rm
CM}_t$ lexsegment complexes.

\begin{thm}\cite[Theorem
3.1]{HYZ 15}\label{join} Let $\Delta_1$ and $\Delta_2$ be two
complexes of dimensions $r_1-1$ and $r_2-1$, respectively. Then
\begin{itemize}

\item[(i)] The join complex $\Delta_1 \ast \Delta_2$ is Cohen-Macaulay
if and only if $\Delta_1$ and $\Delta_2$ are both Cohen-Macaulay.

\item[(ii)] If $\Delta_1$ is Cohen-Macaulay and $\Delta_2$ is
${\rm CM}_t$ for some $t\ge 1$, then $\Delta_1\ast \Delta_2$ is
${\rm CM}_{r_1+t}$ (independent of $r_2$). This is sharp, i.e., if
$\Delta_2$ is strictly ${\rm CM}_t$, then $\Delta_1\ast \Delta_2$ is
strictly ${\rm CM}_{r+t}$.
\end{itemize}

\end{thm}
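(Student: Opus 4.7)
The plan is to reduce everything to two elementary structural facts about the join: each face of $\Delta_1 \ast \Delta_2$ decomposes uniquely as $F = F_1 \cup F_2$ with $F_i \in \Delta_i$, and
\[
\mathrm{link}_{\Delta_1 \ast \Delta_2}(F) \;=\; \mathrm{link}_{\Delta_1}(F_1) \ast \mathrm{link}_{\Delta_2}(F_2);
\]
and on the algebraic side, $I_{\Delta_1 \ast \Delta_2} = I_{\Delta_1} + I_{\Delta_2}$ inside $k[V(\Delta_1)\cup V(\Delta_2)]$, yielding $k[\Delta_1 \ast \Delta_2] \cong k[\Delta_1] \otimes_k k[\Delta_2]$. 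I would also record that $\dim(\Delta_1 \ast \Delta_2) = r_1 + r_2 - 1$ and that the join of two pure complexes is again pure, its facets being exactly the unions of a facet of $\Delta_1$ with a facet of $\Delta_2$.

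For part (i), I would invoke the classical formula $\depth(A \otimes_k B) = \depth A + \depth B$ for finitely generated graded $k$-algebras (depth computed at the irrelevant maximal ideal), combined with $\dim(A \otimes_k B) = \dim A + \dim B$; since $\depth \le \dim$ in each factor, equality in the tensor product forces equality in both factors, giving the biconditional. The purely combinatorial alternative is to apply Reisner's criterion to every link, using the K\"unneth-type formula
\[
\widetilde{H}_i(\Delta \ast \Delta'; k) \;\cong\; \bigoplus_{p+q=i-1} \widetilde{H}_p(\Delta;k) \otimes_k \widetilde{H}_q(\Delta';k)
\]
to trade vanishing for $\Delta \ast \Delta'$ against simultaneous vanishing for $\Delta$ and $\Delta'$.

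For the forward direction of (ii), purity is inherited from the two factors. Fix a face $F = F_1 \cup F_2$ of $\Delta_1 \ast \Delta_2$ with $\#F \ge r_1+t$; because $\#F_1 \le r_1$, one gets $\#F_2 \ge t$, so $\mathrm{link}_{\Delta_2}(F_2)$ is Cohen-Macaulay by the ${\rm CM}_t$ hypothesis, while $\mathrm{link}_{\Delta_1}(F_1)$ is Cohen-Macaulay because links inherit Cohen-Macaulayness. Applying part (i) to these two links then shows $\mathrm{link}_{\Delta_1 \ast \Delta_2}(F)$ is Cohen-Macaulay, so $\Delta_1 \ast \Delta_2$ is ${\rm CM}_{r_1+t}$. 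For sharpness, pick $F_2 \in \Delta_2$ of size $t-1$ whose link in $\Delta_2$ fails to be Cohen-Macaulay, which exists because $\Delta_2$ is strictly ${\rm CM}_t$, and take any facet $F_1$ of $\Delta_1$; then $F = F_1 \cup F_2$ has $\#F = r_1 + t - 1$ and
\[
\mathrm{link}_{\Delta_1 \ast \Delta_2}(F) \;=\; \{\varnothing\} \ast \mathrm{link}_{\Delta_2}(F_2) \;=\; \mathrm{link}_{\Delta_2}(F_2),
\]
which is not Cohen-Macaulay, so $\Delta_1 \ast \Delta_2$ is not ${\rm CM}_{r_1+t-1}$. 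Note that this argument smoothly absorbs the case $t=1$, where $F_2 = \varnothing$ and the link in question is $\Delta_2$ itself.

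The main obstacle I anticipate is securing the depth-adds-over-$k$ statement used in (i): it is standard for graded $k$-algebras but not entirely formal, and one must be careful with grading conventions and the choice of maximal ideal. Bypassing it via Reisner plus the K\"unneth formula for joins is conceptually cleaner but pushes the work into reduced simplicial homology bookkeeping. Once either route is secured, the combinatorics in (ii) and its sharpness claim is routine.
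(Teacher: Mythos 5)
The paper itself offers no proof of this statement---it is imported verbatim from \cite{HYZ 15}---so there is no internal argument to compare yours against. Your proof is correct and is the standard one: part (i) via $k[\Delta_1\ast\Delta_2]\cong k[\Delta_1]\otimes_k k[\Delta_2]$ together with additivity of depth and dimension (or, equivalently, Reisner's criterion plus the K\"unneth formula for joins), and part (ii) via the identity $\mathrm{link}_{\Delta_1\ast\Delta_2}(F_1\cup F_2)=\mathrm{link}_{\Delta_1}(F_1)\ast\mathrm{link}_{\Delta_2}(F_2)$, the counting bound $\#F_1\le r_1$, and part (i) applied to the two links. The sharpness argument is also right: strict ${\rm CM}_t$ forces a face $F_2$ with $\#F_2=t-1$ and non-Cohen-Macaulay link, and joining with a facet of $\Delta_1$ produces a face of size $r_1+t-1$ whose link in the join equals $\mathrm{link}_{\Delta_2}(F_2)$. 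The only step that genuinely needs a citation or a proof is the depth additivity for tensor products over $k$ (or the K\"unneth formula for joins of complexes); you correctly flag this yourself, and either route closes the gap.
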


Let $d\ge 2$ be an integer and let $u \ge v$ be in $M_d$,
$u=x_ix_{i_2}\cdots x_{i_d}$ with $i<i_2<\cdots <i_d$. Let $\Delta$
be a complex on $[n]$ such that $I_\Delta = (L(u, v)) \subset
k[x_1,\cdots, x_n]$. Let $\Delta_1$ be the simplex on
$[i-1]=\{1,\cdots , i-1\}$, and let $\Delta_2$ be the complex of the
lexsegment ideal generated by $L(u, v)$ as an ideal in
$k[x_i,\cdots, x_n]$. Then it is immediate that $\Delta = \Delta_1
\ast \Delta_2$. Observe that, $\Delta$ is disconnected if and only
if $\Delta_1 = \emptyset$ and $\Delta_2$ is disconnected. Similarly,
$\Delta$ is pure if and only if $\Delta_2$ is pure. Furthermore, by
Theorem \ref{join} we have the following corollary.

\begin{cor}\label{CR1}
With the notation and assumption as above the following statements
hold:

\begin{itemize}

\item[(i)] The complex $\Delta$ is Cohen-Macaulay
if and only if $\Delta_2$ is Cohen-Macaulay. In other words, to
check the Cohen-Macaulay property of $\Delta$ one may always assume
 $i = 1$.

\item[(ii)] Assume that $\Delta_2$ is strictly ${\rm CM}_{t}$ for some $t\ge 1$.
Then $\Delta$ is strictly ${\rm CM}_{t+i}$. In particular, a
characterization of ${\rm CM}_t$ squarefree lexsegment ideals with
$i=1$ uniquely provides a characterization of  ${\rm CM}_t$
lexsegment ideals.
\end{itemize}
\end{cor}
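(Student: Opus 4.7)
The plan is to apply Theorem~\ref{join} directly to the factorization $\Delta = \Delta_1 \ast \Delta_2$ already recorded in the paragraph preceding the statement. The crucial observation --- and essentially the only one --- is that $\Delta_1$, being the full simplex on $[i-1]$, is automatically Cohen-Macaulay over $k$: its Stanley-Reisner ideal is the zero ideal, so $k[\Delta_1]$ is a polynomial ring, and every link of a face of $\Delta_1$ is again a simplex. This puts us in a position to feed $\Delta_1 \ast \Delta_2$ into either half of Theorem~\ref{join}.

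For part~(i), I would invoke Theorem~\ref{join}(i): $\Delta_1 \ast \Delta_2$ is Cohen-Macaulay if and only if each factor is. Since $\Delta_1$ is always Cohen-Macaulay, this collapses to the Cohen-Macaulayness of $\Delta_2$. The second sentence of (i) --- that one may assume $i = 1$ --- then follows by relabeling the variables $x_i, \cdots, x_n$ as $x_1, \cdots, x_{n-i+1}$; the relabeled ideal is still a squarefree lexsegment ideal with its first generator beginning in $x_1$, and the associated simplicial complex is isomorphic to $\Delta_2$.

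For part~(ii), I would apply Theorem~\ref{join}(ii) with $\Delta_1$ the simplex on $[i-1]$ (Cohen-Macaulay, having $r_1 = i-1$ vertices) and $\Delta_2$ strictly ${\rm CM}_t$ for some $t \ge 1$. The theorem asserts that the join is then strictly ${\rm CM}_{r_1 + t}$, which is the claimed strict ${\rm CM}$-property for $\Delta$. The last sentence of (ii) --- that a characterization at $i = 1$ determines the general case --- is then immediate: any lexsegment complex splits canonically as a simplex joined with a complex whose defining lexsegment starts at $x_1$, and the join formula transports the ${\rm CM}_t$ level between the two pieces. There is essentially no obstacle here; all the content lives in the setup preceding the statement and in Theorem~\ref{join} itself, and the corollary reduces to a bookkeeping exercise whose only care is to match the number of vertices of $\Delta_1$ with the index shift appearing in Theorem~\ref{join}(ii).
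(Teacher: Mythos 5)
Your overall route is exactly the paper's: the paper offers no written proof beyond the sentence ``by Theorem \ref{join} we have the following corollary,'' and your argument---$\Delta_1$ is a simplex, hence Cohen--Macaulay, so feed $\Delta=\Delta_1\ast\Delta_2$ into the two parts of Theorem \ref{join}---is precisely the intended one. Part (i) is fine, including the relabeling remark.

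There is, however, a genuine gap in part (ii), and it sits exactly at the spot you yourself single out as ``the only care,'' namely matching the size of $\Delta_1$ with the index shift in Theorem \ref{join}(ii). The simplex on $[i-1]$ has $i-1$ vertices and dimension $i-2$, so in the notation of Theorem \ref{join} one has $r_1-1=i-2$, i.e.\ $r_1=i-1$. Theorem \ref{join}(ii) then yields that $\Delta$ is strictly ${\rm CM}_{r_1+t}={\rm CM}_{t+i-1}$, \emph{not} ${\rm CM}_{t+i}$ as the corollary asserts and as you claim to have verified (``which is the claimed strict CM-property for $\Delta$''). These do not agree, and the discrepancy is not yours to wave away: either the corollary's exponent is an off-by-one misprint or your application of the theorem is wrong, and a proof must decide which. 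A sanity check with $i=1$ (where $\Delta_1$ is the empty simplex and $\Delta=\Delta_2$, so $\Delta$ is strictly ${\rm CM}_t$, not ${\rm CM}_{t+1}$), together with the way the paper later uses this corollary---in the final theorem, $\Delta_2$ strictly ${\rm CM}_1$ and $\Delta_1$ the simplex on $[i-1]$ are said to give $\Delta$ ``${\rm CM}_i$ but not ${\rm CM}_{i-1}$''---shows that the correct conclusion is strictly ${\rm CM}_{t+i-1}$ and that the printed ${\rm CM}_{t+i}$ is a typo. You should have detected this mismatch and stated the corrected index rather than asserting that $r_1+t$ equals the claimed value.
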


\begin{rem} Corollary \ref{CR1} substantially simplifies the
statements and proofs of \cite{BS 09} and \cite{BST 12}.\\
\end{rem}

Based on Corollary \ref{CR1}, unless explicitly specified, we will
assume that $i=1$.\\

Bonanzinga, Sorrenti and Terai \cite{BST 12} have given a
characterization of Cohen-Macaulay squarefree lexsegment ideals in
degree $d\ge 2$. We give an improved version of their
result. Our proof is extracted from their proof.\\

\begin{thm}
\cite[An improved version of Theorem 3.4]{BST 12}\label{CMd} Let $u
> v$ be in $M_d$, $I_\Delta = (L(u, v))$.  Then the following
statements are equivalent:
\begin{itemize}
\item[(i)] $\Delta$ is shellable;
\item[(ii)] $\Delta$ is Cohen-Macaulay;
\item[(iii)] $\Delta$ is $S_2$;
\item[(iv)] $\Delta$ is pure and $\lk_\Delta(F)$ is connected for all $F\in \Delta$
with ${\rm dim}(\lk_\Delta(F))=1$.
\end{itemize}
\end{thm}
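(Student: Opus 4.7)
The plan is to prove the cyclic chain $(i) \Rightarrow (ii) \Rightarrow (iii) \Rightarrow (iv) \Rightarrow (i)$. The first three arrows are essentially formal; the last is where the rigid combinatorics of lexsegment ideals genuinely enters.

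For the formal arrows: $(i) \Rightarrow (ii)$ is the classical fact that a shellable complex has a Cohen-Macaulay Stanley--Reisner ring over any field. $(ii) \Rightarrow (iii)$ is immediate because any Cohen-Macaulay ring satisfies Serre's $S_2$ condition. For $(iii) \Rightarrow (iv)$, I would apply the characterization recalled in Section~2: a pure complex is $S_2$ iff $\lk_\Delta(F)$ is connected whenever $\dim \lk_\Delta(F)\geq 1$. Condition (iv) is exactly the special case $\dim \lk_\Delta(F)=1$, so it is a formal weakening of $S_2$.

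The substantive implication is $(iv) \Rightarrow (i)$. Here I would follow the explicit shelling construction in the proof of \cite[Theorem 3.4]{BST 12}, but carefully audit which parts of its hypothesis are actually needed. In that argument, the facets of $\Delta$ are enumerated in a lex-like order tied to their complements in $[n]$, and the shelling condition is checked pair by pair via a case analysis on the shape of the endpoints $u$ and $v$ of the lexsegment. Purity disposes of most cases automatically, while the remaining cases require excluding certain obstruction configurations that the original BST argument controls using the full $S_2$ hypothesis. By Corollary \ref{CR1} we may and do assume $i=1$, which removes the cone factor and makes these configurations easier to inspect directly.

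The main obstacle, and the key ingredient required to upgrade the BST statement, is the propagation principle: for a pure lexsegment complex, every one of the forbidden obstructions must already be visible as a disconnected \emph{one-dimensional} link. I plan to prove this by a direct downward induction on the dimension of the link. Starting from a face $F$ with $\dim \lk_\Delta(F)\geq 2$ and $\lk_\Delta(F)$ disconnected, I would enlarge $F$ one vertex at a time, using the rigidity of the lex order (together with purity) to verify that at least two connected components of $\lk_\Delta(F)$ survive in $\lk_\Delta(F\cup\{v\})$ at each step, until a one-dimensional link is reached. Once this propagation is in hand, hypothesis (iv) rules out all obstructions, the BST shelling construction runs to completion, and (i) follows.
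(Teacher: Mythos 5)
Your formal arrows and the overall reduction to (iv) $\Rightarrow$ (i) match the paper, which likewise invokes \cite[Theorem 3.4]{BST 12} for the equivalence of (i), (ii), (iii) and observes that (iii) $\Rightarrow$ (iv) is the easy half of the $S_2$ characterization recalled in Section~2. The divergence --- and the gap --- is in how you rule out the obstructions under hypothesis (iv). The paper needs no ``propagation principle'': it simply re-reads the case analysis in the proof of \cite[Theorem 3.4]{BST 12} and notes that for a pure $\Delta$ with $(u,v)$ outside the good list (1)--(7), the obstruction exhibited there is in every case (their configurations (a), (b), (c)) already a disconnected link of dimension exactly one. Hypothesis (iv) therefore excludes all bad pairs $(u,v)$ directly, and the BST shelling applies verbatim.

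Your proposed key lemma --- that for a pure lexsegment complex a disconnected link of dimension $\ge 2$ can be pushed down, one vertex at a time, to a disconnected one-dimensional link --- is left entirely unproved, and the induction step as you describe it is false for pure complexes in general. The paper's own remark immediately following this theorem supplies the counterexample $\Delta=\langle\{1,2,3,4\},\{1,5,6,7\}\rangle$: it is pure, $\lk_\Delta(\{1\})$ is disconnected of dimension $2$, yet every one-dimensional link is connected, so ``at least two components survive'' fails as soon as you enlarge $\{1\}$ by a vertex. Hence the induction cannot be run on purity and generic rigidity alone; making it work would require exactly the detailed lexsegment case analysis you are trying to encapsulate, at which point you are back to inspecting the explicit list in \cite{BST 12}. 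The propagation statement is true for lexsegment complexes, but only as a consequence of the theorem (equivalently, of the BST classification), not as an independently obtainable input via the sketched induction.
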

\begin{proof} Clearly, (iii)$\Rightarrow$(iv). Thus, by \cite[Theorem 3.4]{BST 12}
of Bonanzinga, Sorrenti and Terai, we only need to prove (iv)
$\Rightarrow$ (i). As mentioned above, we may assume $i=1$. Checking
all cases from the proof of (iii) $\Rightarrow$ (iv) in
\cite[Theorem 3.4]{BST 12}, it reveals that when $\Delta$ is pure,
if $u$ and $v$ are not in the list (1),...,(7) in their theorem,
then one of the following cases (a) (with $d\ge 3$), (b), or (c)
specified in the their proof, may occur: $(a) \
\lk_\Delta([n]\setminus \{1,2,d+1,d+2\}) =
\langle\{1,2\},\{d+1,d+2\}\rangle,$\\
$(b) \ \lk_\Delta(\{n-d+1,\cdots, \hat{k}, \widehat{k+1},\cdots,
n\}) = \langle\{1,2\},\{1,3\},\cdots,\{1,n-d\},\{k,k+1\}\rangle,$\\
where $n-d+1 \le k \le n-1$,\\ (c) \ $\lk_\Delta(\{n-d+3,\cdots,
n\}) = \langle\{1,2\},\cdots,\{1,n-d-1\}, \{n-d,n-d+1\},
\{n-d,n-d+2\},
\{n-d+1,n-d+2\}\rangle).$\\
In all these cases, $\lk_\Delta(F)$ is disconnected for some $F\in
\Delta$ with ${\rm dim}(\lk_\Delta(F))=1$. Therefore, assuming (iv)
above, $u$ and $v$ will be in the list (1),...,(7) in their theorem.
Hence, by the proof of (iv) $\Rightarrow$ (i) in their theorem,
$\Delta$ is shellable.
\end{proof}

\begin{rem} It is known that for some flag complexes including the independence complex of
a bipartite graph or a chordal graph, the conditions $S_2$ and
Cohen-Macaulay-ness are equivalent \cite{HYZ 10}. Nevertheless, the
condition (d) is in general weaker than the $S_2$ property. For
example, if $\Delta = \langle \{1,2,3,4\}, \{1,5,6.7\}\rangle$, then
$\Delta$ satisfies the condition (d) but does not satisfy the $S_2$
property. Indeed, any one-dimensional face has a connected
one-dimensional link, but $\lk_\Delta(\{1\})$ is disconnected of
dimension 2.
\end{rem}

For $d=2$, the statement in Theorem \ref{CMd}(iv) could be
relaxed as follows. The assumption $i=1$ is still in order.\\

\begin{thm}\label{CM} Let $u
> v$ be in $M_2$, $I_\Delta = (L(u, v))$.  Then the following
statements are equivalent:
\begin{itemize}
\item[(i)] $\Delta$ is shellable;
\item[(ii)] $\Delta$ is Cohen-Macaulay;
\item[(iii)] $\Delta$ is $S_2$;
\item[(iv)] $\Delta$ is pure and connected.
\end{itemize}
\end{thm}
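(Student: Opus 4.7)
The plan is to establish the cycle (i) $\Rightarrow$ (ii) $\Rightarrow$ (iii) $\Rightarrow$ (iv) $\Rightarrow$ (i). The first two implications are general facts: a pure shellable complex is Cohen--Macaulay, and a Cohen--Macaulay complex is $S_2$. For (iii) $\Rightarrow$ (iv), the $S_2$ condition already forces purity by the characterization recorded in the preliminaries, and assuming $\dim \Delta \ge 1$, applying the link-connectedness criterion with $F = \varnothing$ yields that $\lk_\Delta(\varnothing) = \Delta$ is connected.

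The main content is the implication (iv) $\Rightarrow$ (i). My strategy is a reduction to Theorem~\ref{CMd}: it suffices to show that in degree $d = 2$, purity and connectedness of $\Delta$ imply the a priori stronger condition that $\lk_\Delta(F)$ is connected for every $F \in \Delta$ with $\dim \lk_\Delta(F) = 1$. Once this upgrade is in hand, Theorem~\ref{CMd} delivers shellability directly.

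I would argue by contrapositive, reinspecting the three obstructions (a), (b), (c) isolated in the proof of Theorem~\ref{CMd}. Obstruction (a) is only stated for $d \ge 3$ and is vacuous here. For (b) with $d = 2$, the constraint $n - d + 1 \le k \le n - 1$ forces $k = n-1$, so the set whose link is taken reduces to $\{n-1, n\} \setminus \{n-1, n\} = \varnothing$, whence the displayed complex is $\Delta$ itself. Reading off the facets gives $\Delta$ as the disjoint union of the star $\langle \{1, j\} : 2 \le j \le n - 2\rangle$ with the isolated edge $\{n-1, n\}$, which is disconnected. Case (c) specialized to $d = 2$ is analogous: the set $\{n - d + 3, \ldots, n\}$ is already empty, so once more the description is of $\Delta$ itself, and $\Delta$ is the disjoint union of the star $\langle \{1, j\} : 2 \le j \le n - 3\rangle$ with the 1-skeleton of a triangle on $\{n-2, n-1, n\}$, again disconnected. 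Therefore, under (iv) none of the three obstructions occurs, so the pair $u, v$ lies in the admissible list (1)--(7) of \cite[Theorem 3.4]{BST 12}, and the shelling constructed there witnesses (i).

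The main obstacle will be the index bookkeeping in cases (b) and (c): one has to carefully specialize each indexed facet to $d = 2$, notice that the "link" in question degenerates to $\Delta$ itself rather than a proper link, and then read off the disjoint-union structure of the facet set to conclude genuine disconnectedness of $\Delta$, rather than some weaker local failure that would not contradict hypothesis (iv).
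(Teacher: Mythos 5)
Your proposal is correct and follows essentially the same route as the paper: both reduce (iv) $\Rightarrow$ (i) to the case analysis in the proof of \cite[Theorem 3.4]{BST 12}, observing that obstruction (a) is vacuous for $d=2$ while obstructions (b) and (c) degenerate to statements about $\Delta$ itself and exhibit it as disconnected, so that under (iv) the pair $u,v$ must lie in the admissible list (1)--(7) and the shelling there applies. The one (harmless) discrepancy is in case (c): you read the second component off the displayed link formula as the $1$-skeleton of a triangle on $\{n-2,n-1,n\}$, whereas the paper identifies $u=x_1x_{n-2}$, $v=x_{n-2}x_{n-1}$ (so $\{n-2,n-1\}$ is a non-face) and lists only the edges $\{n-2,n\}$, $\{n-1,n\}$; either way this component is disjoint from the star at the vertex $1$, so disconnectedness and the conclusion are unaffected.
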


\begin{proof}
We only need to check (iv) $\Rightarrow$ (i). Once again, checking
the proof of \cite[Theorem 3.4]{BST 12}, the case (a) could not
occur for $d=2$. In case (b), it follows that $u = x_1x_{n-1}$, $v =
x_{n-2}x_n$ with $n>3$. Then, $\Delta = \langle \{1,2\},
\{n-1,n\}\rangle$ is disconnected. In case (c), it turns out that $u
= x_1x_{n-2}$, $v = x_{n-2}x_{n-1}$ with $n>4$. Then, $\Delta =
\langle \{1,2\}, \{n-2,n\}, \{n-1,n\}\rangle$ is again disconnected.
Therefore, assuming purity and connectedness of $\Delta$, $u$ and
$v$ will be in the list (1),...,(7) of \cite[Theorem 3.4]{BST 12}.
Hence, by the proof of (iv) $\Rightarrow$ (i) of the same theorem,
$\Delta$ is shellable.
\end{proof}

Let $u > v$ be in $M_d$, $I_\Delta = (L(u, v))$. Bonanzinga,
Sorrenti and Terai in \cite{BST 12} have shown that for $d\ge 3$,
$\Delta$ is Buchsbaum if and only if it is Cohen-Macaulay. The same
proof implies that this is the case for ${\rm CM}_t$ lexsegment
complexes.

\begin{prop}\label{CMdt} Let $u
> v$ be in $M_d$ with $d\ge 3$ and $I_\Delta = (L(u, v))$. Then for
any $t \ge 0$, $\Delta$ is ${\rm CM}_t$ if and only if $\Delta$ is
Cohen-Macaulay.
\end{prop}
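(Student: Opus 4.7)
My plan is to derive the proposition as an immediate consequence of Theorem~\ref{CMd}, which for $d \ge 3$ characterizes Cohen--Macaulayness of lexsegment complexes in terms of the connectedness of their one-dimensional links. The forward implication, that Cohen--Macaulayness implies ${\rm CM}_t$ for every $t$, is standard (links of Cohen--Macaulay complexes are Cohen--Macaulay) and requires no argument, so the content of the proposition lies entirely in the converse.

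Suppose $\Delta$ is ${\rm CM}_t$. By definition $\Delta$ is pure and $t$ satisfies $0 \le t \le {\rm dim}(\Delta) - 1$. I would then pick any face $F \in \Delta$ with ${\rm dim}(\lk_\Delta(F)) = 1$. Since $\Delta$ is pure, the link $\lk_\Delta(F)$ is pure of dimension ${\rm dim}(\Delta) - \#F$, so ${\rm dim}(\lk_\Delta(F)) = 1$ forces $\#F = {\rm dim}(\Delta) - 1$, which is $\ge t$ by the admissible range of $t$.

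The ${\rm CM}_t$ hypothesis now applies to $F$ and yields that $\lk_\Delta(F)$ is Cohen--Macaulay. Being pure of dimension one, this link is Cohen--Macaulay precisely when it is connected. This verifies condition (iv) of Theorem~\ref{CMd}, whence $\Delta$ is Cohen--Macaulay.

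I do not anticipate any real obstacle. The proof is essentially a codimension check: condition (iv) of Theorem~\ref{CMd} concerns links of codimension one, whereas ${\rm CM}_t$ controls all links of codimension at most ${\rm dim}(\Delta) - t$. Since the inequality $t \le {\rm dim}(\Delta) - 1$ is built into the definition of ${\rm CM}_t$, the relevant codimension-one links are automatically covered, and the hypothesis $d \ge 3$ enters only through the invocation of Theorem~\ref{CMd}.
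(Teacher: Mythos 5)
Your proof is correct, but it reaches Theorem~\ref{CMd} by a genuinely different route than the paper. The paper's proof quotes the proof of \cite[Theorem 4.1]{BST 12} to get $\depth k[\Delta]\ge 2$ for $d\ge 3$, concludes from this that $\Delta$ is $S_2$, and then applies the implication (iii)$\Rightarrow$(ii) of Theorem~\ref{CMd}; the hypothesis $d\ge 3$ thus enters through a depth estimate imported from the literature. You instead verify condition (iv) of Theorem~\ref{CMd} directly from the definition of ${\rm CM}_t$: purity gives ${\rm dim}(\lk_\Delta(F))={\rm dim}(\Delta)-\#F$, so every face with a one-dimensional link has cardinality ${\rm dim}(\Delta)-1\ge t$, and the ${\rm CM}_t$ hypothesis makes that link Cohen--Macaulay, hence connected. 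Your argument is more elementary and self-contained (no depth computation, no appeal to \cite{BST 12} beyond what Theorem~\ref{CMd} already encapsulates). One caveat: as you note, your codimension count needs $t\le {\rm dim}(\Delta)-1$, the range fixed in the paper's definition of ${\rm CM}_t$; this reading of ``for any $t\ge 0$'' is in fact forced, since for $t\ge {\rm dim}(\Delta)$ the condition ${\rm CM}_t$ degenerates to mere purity, and the pure non-Cohen--Macaulay complexes arising in case (a) of the proof of Theorem~\ref{CMd} would then be counterexamples. Read this way, your argument never uses $d\ge 3$ except through Theorem~\ref{CMd}, which is valid for all $d\ge 2$; the restriction to $d\ge 3$ in the proposition reflects the fact that for $d=2$ the interesting non-Cohen--Macaulay ${\rm CM}_t$ complexes of the subsequent theorems all have $t>{\rm dim}(\Delta)-1$, i.e., they live outside the range where your (and the paper's) argument applies.
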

\begin{proof}
Clearly any Cohen-Macaulay complex is ${\rm CM}_t$. Assume that
$\Delta$ is ${\rm CM}_t$. Then as noticed in the proof \cite[Thorem
4.1]{BST 12}, for $d\ge 3$, ${\rm depth}k[\Delta] \ge 2$. Hence
$\Delta$ is $S_2$. Therefore, by Theorem \ref{CM}, $\Delta$ is
Cohen-Macaulay.
\end{proof}

By Proposition \ref{CMdt} to check the ${\rm CM}_t$ property with
$t\ge 1$, for squarefree lexsegment ideals we should restrict to the case $d=2$.\\

We now drop the assumption $i=1$ and assume that $u > v$ are in
$M_2$, $u=x_ix_j$, $v=x_rx_s$ with $i<j$ and $r<s$. Let $I_\Delta =
(L(u, v)) \subset k[x_1,\cdots, x_n]$. Recall that if $\Delta_1$ is
the simplex on $[i-1]$, and $\Delta_2$ is the complex of the
lexsegment ideal generated by $L(u, v)$ as an ideal in
$k[x_i,\cdots, x_n]$. Then $\Delta = \Delta_1 \ast \Delta_2$.

\begin{thm}\label{CM1} Let $u > v$ be in $M_2$, $I_\Delta = (L(u, v))$.
Assume that $\Delta$ is not Cohen-Macaulay. Then the following
statements are equivalent:
\begin{itemize}
\item[(1)] $\Delta$ is Buchsbaum;
\item[(2)] One of the following conditions hold;
\begin{itemize}
\item [(a)] $ u = x_1 x_{n-2},  v = x_{n-2}x_{n-1}, n > 4;$
\item[(b)] $ u = x_1 x_{n-1},  v = x_{n-2} x_n , n > 3$.
\end{itemize}
\item[(3)] $\Delta_1 = \emptyset$ and $\Delta_2$ is pure.
\end{itemize}
Furthermore, in either of these equivalent cases, $\Delta$ is
disconnected.
\end{thm}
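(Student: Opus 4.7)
\emph{Proof plan.} I would run the cycle (1) $\Rightarrow$ (3) $\Rightarrow$ (2) $\Rightarrow$ (1); the ``furthermore'' clause then falls out of step (3) $\Rightarrow$ (2). The two main tools are Corollary \ref{CR1} together with Theorem \ref{join} for reducing to $i=1$, and the case analysis already inside the proof of \cite[Theorem~3.4]{BST 12} to pin down $u$ and $v$.

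For (1) $\Rightarrow$ (3), write $\Delta=\Delta_1\ast\Delta_2$ with $\Delta_1$ the simplex on $[i-1]$ of dimension $i-2$, so that $r_1=i-1$ in the notation of Theorem~\ref{join}. Since $\Delta$ is not Cohen-Macaulay, neither is $\Delta_2$ by Corollary~\ref{CR1}(i), and so $\Delta_2$ is strictly $\mathrm{CM}_t$ for some $t\ge 1$. The sharpness clause of Theorem~\ref{join}(ii) then makes $\Delta$ strictly $\mathrm{CM}_{(i-1)+t}$; being Buchsbaum and not Cohen-Macaulay means $\Delta$ is strictly $\mathrm{CM}_1$, which forces $(i-1)+t=1$, i.e., $i=1$ and $t=1$. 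Hence $\Delta_1=\emptyset$, and since Buchsbaum complexes are pure, $\Delta_2$ is pure, which is exactly (3).

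For (3) $\Rightarrow$ (2), I would apply Theorem~\ref{CM}: under (3) we have $i=1$ and $\Delta=\Delta_2$ pure but not Cohen-Macaulay, so $\Delta$ must be disconnected, and in particular $(u,v)$ lies outside the list (1)--(7) of \cite[Theorem~3.4]{BST 12}. The trichotomy (a)--(c) built into that proof then applies. Case (a) is ruled out for $d=2$, as already observed in the proof of Theorem~\ref{CM}; case (b) specializes to $u=x_1 x_{n-1}$, $v=x_{n-2} x_n$ with $n>3$, giving (2b); case (c) specializes to $u=x_1 x_{n-2}$, $v=x_{n-2} x_{n-1}$ with $n>4$, giving (2a). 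Reading off the facets in either case immediately exhibits two connected components, which is the ``furthermore''.

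For (2) $\Rightarrow$ (1), a direct enumeration shows $\Delta=\langle\{1,2\},\{1,3\},\ldots,\{1,n-3\},\{n-2,n\},\{n-1,n\}\rangle$ in case (2a) and $\Delta=\langle\{1,2\},\{1,3\},\ldots,\{1,n-2\},\{n-1,n\}\rangle$ in case (2b); both are pure of dimension $1$. Every vertex link is then a $0$-dimensional pure complex (a disjoint set of points) and every edge link is $\{\emptyset\}$; each kind of link is Cohen-Macaulay, so $\Delta$ is Buchsbaum. The main obstacle is step (3) $\Rightarrow$ (2): one must confirm that for $d=2$ the case analysis of \cite{BST 12} is genuinely exhaustive and that the threshold inequalities $n>3$ and $n>4$ are tight --- at the boundary values $n=3$ or $n=4$ the candidate complexes fail purity, so they are excluded from the outset --- but this is bookkeeping inside a pre-existing proof rather than a new argument.
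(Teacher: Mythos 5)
Your proposal is correct, but it is organized differently from the paper's proof and is in one respect more self-contained. The paper takes the equivalence $(1)\Leftrightarrow(2)$ wholesale from \cite[Theorem 2.1]{BS 09} and only supplies $(2)\Rightarrow(3)$ and $(3)\Rightarrow(2)$ (both read off from the case analysis already used in the proof of Theorem \ref{CM}), deriving the ``furthermore'' from Theorem \ref{CM} by noting that a pure connected lexsegment complex in degree $2$ would be Cohen-Macaulay. You instead prove the cycle $(1)\Rightarrow(3)\Rightarrow(2)\Rightarrow(1)$ without invoking \cite{BS 09}: your $(1)\Rightarrow(3)$ uses the join decomposition and the sharpness clause of Theorem \ref{join}(ii) to force $i=1$, and your $(2)\Rightarrow(1)$ verifies Buchsbaumness directly by listing the facets and checking that every link of a nonempty face is zero-dimensional or $\{\emptyset\}$, hence Cohen-Macaulay. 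Your $(3)\Rightarrow(2)$ coincides with the paper's, as both rest on the exhaustiveness of cases (a)--(c) in the proof of \cite[Theorem 3.4]{BST 12} for pure non-Cohen-Macaulay lexsegment complexes. What your route buys is independence from \cite{BS 09}; what it costs is one step you should make explicit: in $(1)\Rightarrow(3)$ you assert that $\Delta_2$, being pure and not Cohen-Macaulay, is strictly ${\rm CM}_t$ for some $t\ge 1$. This requires the (easy) remark that every pure complex is ${\rm CM}_s$ when $s$ equals the cardinality of its facets, since links of facets are $\{\emptyset\}$; alternatively, you can shortcut the whole step by observing that if $i\ge 2$ then $\Delta_2=\lk_\Delta([i-1])$ is the link of a nonempty face of the Buchsbaum complex $\Delta$ and hence Cohen-Macaulay, contradicting Corollary \ref{CR1}(i). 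Either patch is routine, so the argument stands.
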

\begin{proof} The equivalence of (1) and (2) is given in \cite[Theorem 2.1]{BS
09}. Assuming (2), then $\Delta_1 = \emptyset$, and as shown in the
proof of Theorem \ref{CM}, $\Delta=\Delta_2$ is pure in both cases
(a) and (b). Thus (2) $\Rightarrow$ (3). Now if $\Delta_1=\emptyset$
and $\Delta_2$ is pure, as it was observed in the proof of Theorem
\ref{CM}, the only cases where $\Delta$ is pure but not
Cohen-Macaulay are the cases (a) and (b) above, which settles (3)
$\Rightarrow$ (2). For the last statement, observe that if $\Delta =
\Delta_2$ also happens to be connected, then by Theorem \ref{CM},
$\Delta$ is Cohen-Macaulay. But this is contrary to the assumption.
Hence $\Delta$ is disconnected.
\end{proof}

\begin{thm} Let $u > v$ be in $M_2$, $I_\Delta = (L(u, v))$. Let
$\Delta = \Delta_1 \ast \Delta_2$ be as above. Let $t \ge 2$. Assume
that $\Delta$ is not ${\rm CM}_{t-1}$. Then the following is
equivalent:

\begin{itemize}
\item[(i)] $\Delta$ is ${\rm CM}_t$;
\item[(ii)] One of the following conditions hold;
\begin{itemize}
\item [(a)] $ u = x_t x_{n-2},  v = x_{n-2}x_{n-1}, n > 4;$
\item[(b)] $ u = x_t x_{n-1},  v = x_{n-2} x_n , n > 3$.
\end{itemize}
\item[(iii)] $\Delta_1$ is of dimension $t-2$ and $\Delta_2$ is pure.
\item[(iv)] $\Delta_1$ is of dimension $t-2$ and $\Delta_2$ is Buchsbaum.
\end{itemize}

\end{thm}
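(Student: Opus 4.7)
The plan is to reduce the entire statement to the $i=1$ case handled by Theorem \ref{CM1}, via the join decomposition $\Delta = \Delta_1 \ast \Delta_2$. Since $\Delta_1$ is the full simplex on $[i-1]$, its dimension is $i-2$, so the conditions on $\Delta_1$ appearing in (iii) and (iv) translate simply to $i=t$. After this reformulation, everything becomes a statement about the lexsegment complex $\Delta_2$, which is cone-free (since the variable $x_i$ appears in $u = x_i x_j$), so Theorems \ref{CM} and \ref{CM1} apply to it directly. I would then run a cyclic chain of implications (i)+hypothesis $\Rightarrow$ (iv) $\Rightarrow$ (iii) $\Rightarrow$ (ii) $\Rightarrow$ (i), invoking the hypothesis ``$\Delta$ not $\mathrm{CM}_{t-1}$'' at each place where it is needed.

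The first step is to upgrade Theorem \ref{join}(ii) to a biconditional, which I would record as a short lemma: for $s\ge 1$, $\Delta = \Delta_1 \ast \Delta_2$ is strictly $\mathrm{CM}_{(i-1)+s}$ if and only if $\Delta_2$ is strictly $\mathrm{CM}_{s}$. The forward implication is Theorem \ref{join}(ii) itself. For the reverse, note that for any $F_2 \in \Delta_2$ we have $\lk_\Delta([i-1]\cup F_2) = \lk_{\Delta_2}(F_2)$ with $|[i-1]\cup F_2| = (i-1)+|F_2|$, so the $\mathrm{CM}_{(i-1)+s}$ property of $\Delta$ translates exactly to the $\mathrm{CM}_s$ property of $\Delta_2$; the strictness statement is then the contrapositive of Theorem \ref{join}(ii).

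The crucial structural step is to observe that the cone-free $d=2$ lexsegment complex $\Delta_2$ can be strictly $\mathrm{CM}_s$ only for $s=1$. Indeed, every $\mathrm{CM}_s$ complex is pure by definition, and a pure $d=2$ lexsegment complex whose defining data has $i=1$ in its own frame is, by Theorems \ref{CM} and \ref{CM1}, either Cohen-Macaulay (the pure connected case) or strictly Buchsbaum (the pure disconnected cases (2a), (2b) of Theorem \ref{CM1}). Combining this with the biconditional above and the hypothesis that $\Delta$ is strictly $\mathrm{CM}_t$ for $t\ge 2$, we conclude that $\Delta_2$ is not Cohen-Macaulay (else $\Delta$ would be Cohen-Macaulay by Theorem \ref{join}(i), contradicting the hypothesis), hence $\Delta_2$ is strictly Buchsbaum and $s = t-i+1 = 1$, i.e., $i = t$. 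This establishes (i) plus the hypothesis $\Rightarrow$ (iv).

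The remaining implications are routine. (iv) $\Rightarrow$ (iii) is immediate since Buchsbaum complexes are pure. For (iii) $\Rightarrow$ (ii), the hypothesis again forces $\Delta_2$ to be non-Cohen-Macaulay; Theorem \ref{CM1} applied inside $k[x_t,\ldots,x_n]$ then yields the two explicit forms in (ii), with $x_1$ replaced by $x_t$. Finally, (ii) $\Rightarrow$ (i) is a direct application of Theorem \ref{join}(ii) with $r_1 = t-1$ and $\Delta_2$ strictly Buchsbaum, giving $\Delta$ strictly $\mathrm{CM}_{(t-1)+1} = \mathrm{CM}_t$. The main obstacle is the structural claim that $\Delta_2$ must be either Cohen-Macaulay or strictly Buchsbaum; once this dichotomy is in place, what remains is bookkeeping of the strictly-$\mathrm{CM}_\bullet$ index through the join.
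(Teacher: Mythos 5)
Your proposal is correct and follows essentially the same route as the paper: both reduce everything to $\Delta_2$ via the join decomposition, use the dichotomy that a pure non-Cohen-Macaulay degree-2 lexsegment complex with $i=1$ must be one of the two strictly Buchsbaum cases of Theorem \ref{CM1}, and then track the strict ${\rm CM}_\bullet$ index through the join (Corollary \ref{CR1}) to force $i=t$. Your explicit biconditional upgrade of Theorem \ref{join}(ii) is a slightly more careful packaging of what the paper gets from the ``strictly'' clause of Corollary \ref{CR1}(ii), but it is not a different argument.
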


\begin{proof} The equivalence of (ii), (iii) and (iv) follows by applying
Theorem \ref{CM1} to $\Delta_2$.
Assuming (iv), the statement (i) follows by Corollary \ref{CR1}. Now
assume (i). Let $u=x_ix_j$, $v=x_rx_s$. Then since $\Delta$ is ${\rm
CM}_t$, it is pure, and hence $\Delta_2$ is pure. But since $\Delta$
is not Cohen-Macaulay, $\Delta_2$ can not be Cohen-Macaulay. Thus by
Theorem \ref{CM1}, $\Delta_2$ is Buchsbaum but not Cohen-Macaulay.
Now since $t\ge 2$ and $\Delta$ is not ${\rm CM}_{t-1}$, it follows
that $\Delta \neq \emptyset$ and $i\ge 2$. Hence by \ref{CR1}
$\Delta = \Delta_1 \ast \Delta_2$ is ${\rm CM}_i$ but not ${\rm
CM}_{i-1}$. Therefore, $i=t$ and $\Delta_1$ is of dimension $t-2$.
\end{proof}

\section*{Acknowledgments}
Part of this note was prepared during a visit of Institut de
Math\'{e}matiques de Jussieu, Universit\'{e} Pierre et Marie Curie
by the second and the third author. This visit was supported by
Center for International Studies \& Collaborations(CISSC) and French
Embassy in Tehran in the framework of the Gundishapur project
27462PL on the Homological and Combinatorial Aspects of Commutative
Algebra. The third author has been supported by research grant no.
4/1/6103011 of University of Tehran.

\end{document}